\newtheorem{theorem}{Theorem}
\newtheorem{lemma}[theorem]{Lemma}
\newtheorem{corollary}[theorem]{Corollary}
\newtheorem{definition}{Definition}
\newtheorem{claim}{Claim}
\newcommand{\inv}[2]{\mathrm{inv}_{#2}(#1)}
\newcommand{\Z}{\mathbb{Z}}
\newcommand{\GF}{\mathbb{F}}
\newcommand{\poly}{\mathcal{P}}
\title{A note on multiplicative-inverse chaining in finite fields}
\author{Divyarthi Mohan\thanks{Boston University. \texttt{dmohan@bu.edu}.} \and R.\ Ravindraraj\thanks{Hawcx Inc. \texttt{ravi@hawcx.com}.}}
\date{}
\begin{document}

\maketitle

\begin{abstract}
    We consider chaining multiplicative-inverse operations in finite fields under alternating polynomial bases. When using two distinct polynomial bases to alternate the inverse operation we obtain a partition of $\GF_{p^n}\setminus \GF_p$ into disjoint cycles of even length. This allows a natural interpretation of the cycles as permutation cycles. Finally, we explore chaining under more than two polynomial bases.
\end{abstract}

\section{Introduction}

Finite field $\GF_{p^n}$ of order $p^n$ for some prime $p$ and positive integer $n>0$ are commonly represented as polynomial quotient rings or vector spaces over $\GF_p$. In particular, the multiplicative inverse is defined with respect to a basis. We explore chaining multiplicative-inverse operations by alternating bases. Using standard polynomial basis representations, we generate a \emph{chain} or sequence of elements/polynomials in $\GF_p[X]$ of degree at most $n-1$ by switching the basis irreducible polynomial between each inverse operation. We make preliminary observations about the chains when alternating the basis between two distinct irreducible polynomials of degree $n$ by considering the graph/matching induced by multiplicative inverse under a given basis. That is, there is an edge between two elements if and only if they are multiplicative inverses of each other. Since for any element with degree greater than $0$ the inverse under two distinct polynomial basis are always distinct~\cite{Conrad_Roots}, the matchings induced by two distinct polynomial basis are strictly disjoint except on elements of the base field $\GF_{p}$. Therefore, alternating between two basis produces an alternating path made of two matchings. This induces a partition of the non-zero degree elements into disjoint cycles of even length, allowing a natural interpretation of the cycles as permutation cycles. That is, chaining two involutions (multiplicative-inverse functions) induces a set of permutations. We note that this is completely different from the composition or product of two involutions, which are well-studied in the literature~\cite{petersen2013write,BurnetteCharles2017FPit}. We also explore chaining inverse operations under more than two bases polynomials. Here the structure of the sequences produced are no longer simple cycles.

\section{Notation and Preliminaries}

First, we briefly describe some notation and standard terminology we use in this paper.  $\GF_q$ denotes the finite field of order $q$. In particular, for any prime $p$, we use $\GF_p = \Z/p\Z$ to denote the finite field of order $p$. 
For any $q = p^n$, $\GF_q$ is a field extension of $\GF_p$, and we use \emph{polynomial basis representations} for $\GF_q$. Let $f(X)\in \GF_p[X]$ be an irreducible polynomial of degree $n$, then $\GF_{q}$ is the splitting field of $f$ and is isomorphic to the quotient ring $\GF_p[X]/f(X)$. 

\paragraph{Polynomial basis representation.} Let $\poly(p,n)$ denote the set of all polynomials in $\GF_p[X]$ of \emph{degree at most} $n-1$. A polynomial $a \in \poly(p,n)$ represents an element of $\GF_{p^n}$, and this representation is well-defined only with respect to a corresponding irreducible polynomial $f(X) \in \GF_p[X]$ of degree $n$. That is, polynomial representations can map to different isomorphisms of $\GF_{p^n}$ and each isomorphism is specified by a unique monic irreducible polynomial of degree $n$.~\footnote{Wlog, throughout the paper, any basis is defined with respect to \emph{monic} irreducible polynomials.} Note that all such isomorphisms always map any zero degree polynomial $a\in \GF_p$ to itself. For any $a_1, a_2\in \GF_p[X]/f(X)$, $a_1 + a_2$ is the standard polynomial addition in $\GF_p[X]$, and the multiplication is defined as $a_1\cdot a_2 \mod f(X)$. In particular, for any polynomial $a_1, a_2 \in \GF_p[X]$ (of degree less than $n$) the field multiplication in $\GF_{p^n}$ is always defined with respect to some irreducible polynomial $f(X) \in \GF_p[X]$ of degree $n$. Further, we use the following notation to denote the multiplicative inverse of an element. 

\begin{definition}[Multiplicative Inverse]
    Let $a\in \GF_p[X]$ be a non-zero polynomial of degree less than $n$. We define $\inv a f \in \GF_p[X]$ to be the \emph{multiplicative inverse} of $a$ with respect to the irreducible polynomial $f(X) \in \GF_p[X]$ of degree $n$. That is, $a\cdot \inv a f \equiv 1 \mod f(X)$.
\end{definition}

For any irreducible polynomial $f$ of degree $n$, the multiplicative inverse function $\inv \cdot f$ is an \emph{involution}, i.e., $\inv{\inv a f}{f} = a$ for all $a\in \poly(p,n)\setminus \{0\}$.

\paragraph{Graph representation of $\mathrm{inv}_f$.} It will be helpful to visualize the multiplicative inverse (w.r.t.\ to a given  $f$) as a graph over the elements of $\poly(p,n)\setminus \{0\}$, where two vertices $u,v\in \poly(p,n)$ have an edge between them if and only if $\inv u f = v$. Note that, the induced graph on $\poly(p,n)\setminus \{0,1,p-1\}$ is a perfect matching.


We next note that, given two distinct irreducible polynomial $f_1,f_2 \in \GF_p[X]$ of degree $n$, the matchings induced by $\mathrm{inv}_{f_1}$ and $\mathrm{inv}_{f_2}$ share a common edge $(u,v)$ if and only if $u,v$ have degree zero (i.e., $u,v \in \GF_p$). We cast this in the following claim.

\begin{claim}\label{claim:distinct inverse}
Let $f_1,f_2 \in \GF_p[X]$ be two irreducible polynomials of degree $n$. For any $a \in \poly(p,n)$ of degree at least $1$, we have $\inv a {f_1} = \inv a {f_2}$ if and only if $f_1  \equiv f_2$.
\end{claim}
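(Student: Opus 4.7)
The plan is to handle the trivial direction first and then exploit a degree count in $\GF_p[X]$ for the substantive direction. Writing $b = \inv{a}{f_1}$, the defining congruences give $f_1 \mid (ab - 1)$ in $\GF_p[X]$, and similarly $f_2 \mid (ab - 1)$ if $\inv{a}{f_2} = b$ as well. Since $f_1, f_2$ are monic irreducible polynomials in $\GF_p[X]$, if they are distinct they are coprime, so their product $f_1 f_2$ divides $ab - 1$.

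Next I would compare degrees. Both $a$ and $b$ are elements of $\poly(p,n)$, hence have degree at most $n-1$, so $\deg(ab - 1) \le 2n - 2$. On the other hand, $\deg(f_1 f_2) = 2n$. The only polynomial of degree at most $2n-2$ divisible by a polynomial of degree $2n$ is the zero polynomial, so $ab - 1 = 0$ in $\GF_p[X]$, i.e., $ab = 1$.

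The final step is to derive a contradiction from the hypothesis that $a$ has degree at least $1$. Since $ab = 1$ forces $b \ne 0$, we have $\deg(ab) = \deg(a) + \deg(b) \ge 1$, contradicting $ab = 1$ (which has degree $0$). Hence the assumption $f_1 \ne f_2$ is untenable, completing the only-if direction; the if direction is immediate from the definition of $\inv{\cdot}{f}$.

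There is no real obstacle here: the argument is a clean application of the fact that $\GF_p[X]$ is a UFD together with a degree bound. The only subtlety worth flagging in the write-up is the use of monicness (so that coprimality of distinct irreducibles yields $f_1 f_2 \mid (ab-1)$ with the correct leading behavior), and the observation that $\deg a \ge 1$ is exactly what rules out the otherwise-admissible solution $a = b = 1$ (or $a, b \in \GF_p^\times$ more generally) in which $ab = 1$ can occur.
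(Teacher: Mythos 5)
Your proof is correct, and it reaches the key intermediate conclusion --- that $a\cdot\inv{a}{f_1}=1$ holds as an identity in $\GF_p[X]$, not just modulo the bases --- by a genuinely different mechanism than the paper. The paper writes $a g - 1 = h_1 f_1 = h_2 f_2$, passes to the splitting field, evaluates at the $n$ distinct roots of $f_1$, invokes the cited fact that distinct irreducibles share no roots to force $h_2(\alpha)=0$ at all of them, and then uses $\deg h_2 \le n-2$ to conclude $h_2\equiv 0$. You instead stay entirely inside $\GF_p[X]$: distinct monic irreducibles are coprime in a UFD, so $f_1 f_2 \mid (ab-1)$, and $\deg(ab-1)\le 2n-2 < 2n$ forces $ab-1=0$. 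Both arguments then finish identically, contradicting $ab=1$ with $\deg a\ge 1$. Your route is more self-contained --- it needs no field extension and no external fact about shared roots, only unique factorization and a degree count --- while the paper's root-counting argument is slightly more hands-on with the explicit cofactors $h_1,h_2$. One small point worth keeping in your write-up: the coprimality step does rely on the paper's convention that bases are monic (so that ``distinct'' means non-associate), which you correctly flag; this is exactly the role played by the paper's footnote identifying $f$ with $c\cdot f$ for $c\in\GF_p\setminus\{0\}$.
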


\begin{proof} Suppose there exists $f_1 \neq f_2$ such that for some $a,g\in \poly(p,n)$ of degree greater than $0$ we have $\inv a {f_1} = \inv a {f_2} = g$. This implies that there exists $h_1,h_2\in \GF_p[X]$ such that $a(X)\cdot g(X) = 1 + h_1(X)\cdot f_1(X) = 1 + h_2(X)\cdot f_2(X)$, and $h_1,h_2$ have degree at most $n-2$. Hence, we have \[
    h_1(X)\cdot f_1(X) = h_2(X)\cdot f_2(X).\]

In particular, for any root $\alpha$ of $f_1$ we have $h_1(\alpha)\cdot f_1(\alpha) = h_2(\alpha)\cdot f_2(\alpha) = 0$. However, 
any two different~\footnote{Note that we do \emph{not} consider an irreducible polynomial $f(X)$ as being different than $c\cdot f(X)$ for $c\in \GF_p\setminus \{0\}$.} irreducible polynomials over a field do not share any common roots in any field extension~\cite{Conrad_Roots}, hence $f_2(\alpha) \neq 0$. This implies that $h_2(\alpha) = 0$ for all the $n$ distinct roots $\alpha$. But $h_2$ has at most $n-2$ degree, so it must be the case that $h_2 \equiv 0$. This leads to a contradiction since $a$ has degree at least one.

\end{proof}

\section{Inverse Chaining with Two Bases}

In this section we introduce multiplicative inverse chaining by alternating the multiplicative inverse operation with respect to two different irreducible polynomials. In particular, two different polynomial basis representations of $\GF_{p^n}$ are used when considering the multiplicative inverse of an element. We start by defining a \emph{chain} induced by alternating two bases.

\begin{definition}
For any integer $k>0$, polynomial $a\in \poly(p,n)$, and irreducible polynomials $f_1,f_2 \in \GF_p[X]$ of degree $n$, we define the \emph{$k$-chain} of $a$ with respect to $(f_1,f_2)$ as the sequence $(a_0,a_1,\ldots, a_k)$ such that
\[
a_i = 
\begin{cases}
a \quad \text{if } i=0\\
\inv{a_{i-1}}{f_1} \quad \text{if } i \text{ is odd}\\
\inv{a_{i-1}}{f_2} \quad \text{if } i \text{ is even}
\end{cases}
\]
\end{definition}

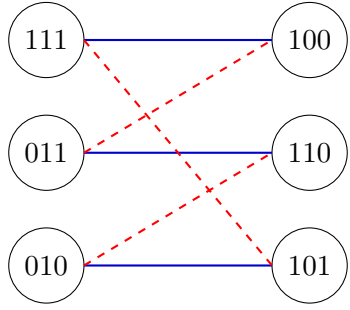
\begin{figure}
\begin{center}
\begin{tikzpicture}

\tikzset{circle node/.style={draw, circle, minimum size=1cm, inner sep=0pt}}

\def\labels{{"111","011","010","100","110","101"}}

\foreach \col in {0,1} {
    \foreach \row in {0,1,2} {
        \pgfmathtruncatemacro{\index}{\col*3 + \row}
        \pgfmathsetmacro{\x}{\col * 3.5} 
        \pgfmathsetmacro{\y}{- \row * 1.5} 
        \node[circle node] at (\x,\y) {\pgfmathparse{\labels[\index]}\pgfmathresult};
    }
}

\foreach \row in {0,1,2} {
    \draw[color = blue!80!black, thick] (0.5,-\row*1.5) -- (3,-\row*1.5);
}

\draw[color = red, thick, dashed] (0.5,-0)-- (3,-3);
\draw[color = red, thick, dashed] (0.5,-3)-- (3,-1.5);
\draw[color = red, thick, dashed] (0.5,-1.5)-- (3,-0);
\end{tikzpicture}
\end{center}
\caption{$\GF_8\setminus \{0,1\}$ under two polynomial basis representations. $b_2b_1b_0$ represents the polynomials $b_2x^2 + b_1x + b_0 \in \GF_2[X]$ of degree at most $2$. The solid blue and dashed red edges represent the inverse operations under irreducible polynomials $x^3+x+1$ and $x^3+x^2+1$ respectively. For example, $(x^2 + x + 1)*(x^2) = 1 \mod x^3+x+1$ so (111){\color{blue!80!black}---}(100), and $(x^2 + x + 1)*(x^2+1) = 1 \mod x^3+x^2+1$ so (111){\color{red}- -}(101).}
\label{fig:complete-chain}
\end{figure}

If $a\in \GF_p$ (i.e., a constant or zero-degree polynomial in $\poly(p,n)$) then the chain is simply an alternating sequence of $a$ and $a^{-1} \in \GF_p$ since $\inv a f = a^{-1}$ for all irreducible polynomial $f$.

Figure~\ref{fig:complete-chain} offers a visualization of a $6$-chain in $\GF_8$. In this example, the $6$-chain forms a closed loop with $a_6 = a_0$. In fact, the following lemma shows that for any $a \in \poly(p,n)$ of degree at least $1$ and any two irreducible polynomials $f_1,f_2\in \GF_p[X]$ of degree $n$, the inverse chaining sequence of $a$ with respect to $(f_1,f_2)$ induces an even length \emph{cycle} (i.e., a closed loop). Hence forth, we refer to this closed loop sequence as the \emph{inverse chain} of $a$.

\begin{theorem}
Given any non-constant polynomial $a\in \GF_p[X]$ of degree at most $n-1$ and irreducible polynomials $f_1, f_2 \in \GF_p[X]$ of degree $n$, there exists an even integer $k$ such that the $k$-chain of $a$ with respect to $(f_1,f_2)$ is a cycle of length $k$. That is, $a_k = a_0 = a$ and $a_0,a_1,\ldots,a_{k-1}$ are all distinct polynomials.
Moreover, if $f_1 \neq f_2$ then the cycle/sequence has even length of $k\ge 4$. 
\end{theorem}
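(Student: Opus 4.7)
The plan is to recast the chain via the single permutation obtained by composing the two inverse operations. Define $F : \poly(p,n)\setminus\{0\} \to \poly(p,n)\setminus\{0\}$ by $F(x) := \inv{\inv{x}{f_1}}{f_2}$. As the composition of two involutions, $F$ is a bijection, and the even-indexed terms of the chain are precisely the forward orbit $a_0, F(a_0), F^2(a_0), \ldots$. Finiteness of $\poly(p,n)$ yields a smallest $m \geq 1$ with $F^m(a_0) = a_0$; set $k := 2m$. By minimality of $m$, the even-indexed terms $a_0, a_2, \ldots, a_{2(m-1)}$ are pairwise distinct, and applying the bijection $\inv{\cdot}{f_1}$ then shows that the odd-indexed terms $a_1, a_3, \ldots, a_{2m-1}$ are pairwise distinct as well.

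Next I would dispose of the parity/length bound. If $f_1 = f_2$, then $F$ is the identity on $\poly(p,n)\setminus\{0\}$, so $m=1$ and $k=2$. If $f_1 \neq f_2$, the goal is $m \geq 2$. Supposing instead $F(a_0) = a_0$ and applying $\inv{\cdot}{f_2}$ to the equation $\inv{\inv{a_0}{f_1}}{f_2} = a_0$ yields $\inv{a_0}{f_1} = \inv{a_0}{f_2}$, which directly contradicts Claim~\ref{claim:distinct inverse} because $\deg(a_0) \geq 1$. Hence $k \geq 4$, and $k$ is even by construction.

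The main remaining obstacle, as I see it, is ruling out collisions between even-indexed and odd-indexed terms. First I would observe that $\GF_p$ is closed under $\inv{\cdot}{f}$ for every irreducible $f$, since the ordinary field inverse in $\GF_p$ serves as the multiplicative inverse modulo any $f$. Unrolling this forward \emph{and} backward along the chain from any hypothetical $a_\ell \in \GF_p$ would force $a_0 \in \GF_p$, contradicting the hypothesis that $a_0$ is non-constant; therefore no $a_\ell$ satisfies $a_\ell^2 \equiv 1$, and in particular $a_\ell \neq a_{\ell+1}$ for every $\ell$. Now if $a_{2i} = a_{2j+1}$ for some $0 \leq i,j < m$, then alternately applying $\inv{\cdot}{f_1}$ and $\inv{\cdot}{f_2}$ to both sides propagates the equality along the chain, giving $a_{2i+s} = a_{2j+1-s}$ for all $s \geq 0$ (indices understood modulo $k$). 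Choosing $s = j-i$ (and the symmetric backward propagation when $i > j$) collapses this to $a_{i+j} = a_{i+j+1}$, contradicting the previous sentence. Hence $a_0, \ldots, a_{k-1}$ are pairwise distinct, and the $k$-chain is a simple cycle of length $k$.
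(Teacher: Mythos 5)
Your proof is correct, but it takes a genuinely different route from the paper's. The paper argues graph-theoretically: by Claim~\ref{claim:distinct inverse} the matchings induced by $\mathrm{inv}_{f_1}$ and $\mathrm{inv}_{f_2}$ on $\poly(p,n)\setminus\{0,1,\ldots,p-1\}$ are two edge-disjoint perfect matchings, so their union decomposes into vertex-disjoint even cycles of length at least $4$, and the $k$-chain is exactly the traversal of the cycle containing $a$. You instead work with the composed permutation $F=\mathrm{inv}_{f_2}\circ\mathrm{inv}_{f_1}$ and the orbit of $a$ under it, using Claim~\ref{claim:distinct inverse} only to rule out fixed points of $F$ (hence $k\ge 4$). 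The price of bypassing the matching-union decomposition is that you must separately exclude collisions between even- and odd-indexed terms; your propagation argument does this correctly, resting on the observation that no chain element lies in $\GF_p$, so no $a_\ell$ satisfies $a_\ell^2\equiv 1$ and consecutive terms always differ. What you gain is a more self-contained and elementary write-up: it effectively proves from scratch the ``union of two disjoint perfect matchings is a disjoint union of even cycles'' fact that the paper invokes without proof, and it makes explicit why the chain of a non-constant element never enters $\GF_p$ --- a point the paper leaves implicit when it restricts attention to the subgraph on the non-constant polynomials. Both arguments dispose of the degenerate case $f_1=f_2$ in the same way.
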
 

\begin{proof}
When $f_1 = f_2$, clearly the sequence is a loop of length two since $ a_1 = \inv {a_0} {f_1} = \inv {a_0} {f_2}$ (thus, $a_0 = \inv{a_1} {f_2}$). 

Suppose $f_1 \neq f_2$. Consider the subgraphs on $\poly(p,n)\setminus\{0,1,\ldots,p-1\}$ induced by $\mathrm{inv}_{f_1}$ and $\mathrm{inv}_{f_2}$. These are two different \emph{perfect matchings} $\mathcal{M}_1$ and $\mathcal{M}_1$. By Claim~\ref{claim:distinct inverse}, we have that $\mathcal{M}_1\cap \mathcal{M}_2 = \emptyset$. Hence, the union of the two perfect matchings $\mathcal{M}_1\cup \mathcal{M}_2$ induces a set of disjoint even cycles on $\poly(p,n)\setminus\{0,1,\ldots,p-1\}$, of length at least $4$ .
Let $k$ be the size of the cycle containing $a$. The traversal of this cycle, starting from the edge $(a,\inv a {f_1})$ and ending with the edge $(\inv a {f_2}, a)$, is precisely the $k$-chain of $a$ with respect to $(f_1,f_2)$. 
\end{proof}

Observe that, by simply reversing the above traversal of the cycle containing $a$, we obtain the $k$-chain of $a$ with respect to $(f_2,f_1)$. That is, the inverse chain of $a$ with respect to $(f_1,f_2)$ is the reverse of the inverse chain of $a$ with respect to $(f_2,f_1)$. 

\begin{corollary}
Given any distinct irreducible polynomials $f_1,f_2 \in \GF_p[X]$ of degree $n$, inverse chaining induces a unique partition of $\poly(p,n)\setminus \{0,1,\ldots,p -1\}$ into cycles of even length $\ge 4$. 
\end{corollary}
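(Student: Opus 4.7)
The plan is to obtain the corollary as a direct structural consequence of the theorem combined with Claim~\ref{claim:distinct inverse}. The essential point is that once we know every non-constant element of $\poly(p,n)$ sits on a closed cycle of even length at least $4$ under the alternating inverse operation, the uniqueness of the partition is a standard fact about $2$-regular graphs.

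Concretely, I would first set $V = \poly(p,n)\setminus \{0,1,\ldots,p-1\}$ and consider the edge-coloured multigraph $G = (V, \mathcal{M}_1 \cup \mathcal{M}_2)$, where $\mathcal{M}_i$ is the perfect matching on $V$ induced by $\mathrm{inv}_{f_i}$. Each vertex $v \in V$ is incident to exactly one edge of $\mathcal{M}_1$ (namely $(v, \inv{v}{f_1})$) and exactly one edge of $\mathcal{M}_2$ (namely $(v, \inv{v}{f_2})$), so $G$ is $2$-regular. By Claim~\ref{claim:distinct inverse}, $\mathcal{M}_1 \cap \mathcal{M}_2 = \emptyset$, so these two edges are distinct and $G$ has no double edges. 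A $2$-regular simple graph decomposes uniquely into vertex-disjoint simple cycles, which gives both the partition and its uniqueness.

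Next I would observe that, because the two incident edges at every vertex come from different matchings, any cycle of $G$ must alternate between $\mathcal{M}_1$-edges and $\mathcal{M}_2$-edges, hence has even length; and since $\mathcal{M}_1 \cap \mathcal{M}_2 = \emptyset$, no cycle can have length $2$, so every cycle has length at least $4$. Finally, to identify these cycles with the inverse chains of the preceding theorem, I would note that starting at any $a \in V$ and repeatedly applying $\mathrm{inv}_{f_1}$ and $\mathrm{inv}_{f_2}$ alternately is exactly the traversal of the cycle of $G$ through $a$ beginning with its $\mathcal{M}_1$-edge; by the theorem this closes up after exactly $k$ steps, where $k \geq 4$ is the length of that cycle.

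I do not anticipate any real obstacle here: everything has essentially been done in the theorem and Claim~\ref{claim:distinct inverse}. The only thing to be careful about is the phrasing of ``uniqueness'' of the partition; the cleanest way is to invoke the fact that the cycle decomposition of a $2$-regular simple graph is uniquely determined by its connected components, rather than attempting to argue uniqueness directly from the chain construction.
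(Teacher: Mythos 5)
Your proposal is correct and follows essentially the same route as the paper: the corollary is obtained there directly from the theorem's proof, which likewise forms the union of the two perfect matchings $\mathcal{M}_1 \cup \mathcal{M}_2$, invokes Claim~\ref{claim:distinct inverse} for disjointness, and concludes that the resulting $2$-regular graph decomposes into disjoint alternating even cycles of length at least $4$. Your extra care in spelling out why the cycle decomposition of a $2$-regular simple graph is unique is a reasonable tightening of what the paper leaves implicit, but it is not a different argument.
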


Different pairs of irreducible polynomials produce distinct partition of cycles, since the matching induced by $\mathrm{inv}_f$, on non-constant polynomials, never shares an edge with the matching of $\mathrm{inv}_g$ when $f\neq g$ are different irreducible polynomials.

\subsection{Permutation Cycles}
In this section we explore how inverse chaining can be used to generate permutations from $\mathcal S_{q}$ or $\mathcal S_{m}$ for $m=p^n - p$. As mentioned above, the inverse chaining with respect to two distinct bases induces a partition of $\poly(p,n)\setminus \{0,1,\ldots,p_1\}$ into cycles. By fixing a orientation for each cycle, we view this as a cycle decomposition of a permutation.

\begin{theorem}\label{thm:permutation_cycles}
Given any two distinct irreducible polynomials $f_1,f_2 \in \GF_p[X]$ of degree $n$, consider the inverse chaining induced cycles over $\poly(p,n)\setminus \{0,1,\ldots,p_1\}$. For a fixed orientation of each cycle, we define a permutation $\sigma \in \mathcal{S}_q$ by considering these (directed) cycles as the cycle decomposition, and elements of $\GF_p$ are mapped to their inverse.\footnote{As a convention, $0$ maps to itself in the permutation.}
\end{theorem}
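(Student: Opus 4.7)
The plan is to verify that the construction in the theorem yields a well-defined element of $\mathcal{S}_q$; this is essentially a consolidation of the preceding structural results into the language of permutations, so the proof is mostly a matter of unpacking definitions. I would work with the identification $\GF_q \cong \poly(p,n)$ and show that $\sigma$ maps $\poly(p,n)$ to itself bijectively.

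First, I would invoke the preceding corollary: the inverse chaining with respect to $(f_1,f_2)$ partitions $\poly(p,n)\setminus\{0,1,\ldots,p-1\}$ into a disjoint union of even-length cycles. Once an orientation $a_0 \to a_1 \to \cdots \to a_{k-1} \to a_0$ is fixed on each such cycle, it determines a cyclic bijection $a_i \mapsto a_{(i+1)\bmod k}$ on that cycle's vertex set. Because these cycles are pairwise vertex-disjoint (as they form a partition) and together exhaust $\poly(p,n)\setminus \GF_p$, their union defines a bijection $\sigma_1 : \poly(p,n)\setminus \GF_p \to \poly(p,n)\setminus \GF_p$.

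Next, on $\GF_p$ I would define $\sigma_2(0) = 0$ and $\sigma_2(a) = a^{-1}$ for $a \in \GF_p\setminus\{0\}$. Inversion on the multiplicative group $\GF_p^*$ is a bijection (in fact an involution), and extending by the identity on $\{0\}$ gives a bijection of $\GF_p$. Since $\poly(p,n) = \GF_p \sqcup (\poly(p,n)\setminus \GF_p)$ is a disjoint union and $\sigma$ restricts to a bijection on each piece to itself, $\sigma$ is a permutation of $\poly(p,n)$, i.e., $\sigma \in \mathcal{S}_q$. Moreover, by construction, the cycle decomposition of $\sigma$ consists of the oriented inverse-chaining cycles on $\poly(p,n)\setminus \GF_p$, together with the fixed points $0$ and $\pm 1$ and the standard $2$-cycles $(a,a^{-1})$ among the remaining elements of $\GF_p^*$.

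There is no genuine obstacle here; the only subtlety is making sure the non-constant cycles partition $\poly(p,n)\setminus \GF_p$ without overlap, which is already guaranteed by Claim~\ref{claim:distinct inverse} and its corollary (distinct irreducible polynomials induce edge-disjoint matchings on non-constant elements, so the union $\mathcal{M}_1 \cup \mathcal{M}_2$ is a disjoint union of simple even cycles). Thus the proof amounts to explicitly assembling $\sigma_1$ and $\sigma_2$ and observing that the result is a bijection of an $q$-element set.
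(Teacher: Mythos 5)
Your proof is correct and follows exactly the route the paper intends: the paper gives no explicit proof of this theorem, treating it as an immediate consequence of the preceding corollary that inverse chaining partitions $\poly(p,n)\setminus\{0,1,\ldots,p-1\}$ into disjoint even cycles, which is precisely what you invoke before assembling the bijection from the oriented cycles together with inversion on $\GF_p$. Nothing is missing, and your explicit check that the two pieces are disjoint and each mapped bijectively to itself is exactly the right level of detail.
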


Observe that, no two different pairs of irreducible polynomials can induce the same permutation (for any choice of cycle orientation). Therefore, inverse chaining can be used to generate at least $M(p,n)\cdot(M(p,n)-1)$ many distinct permutations from $\mathcal S_q$, where the necklace polynomial $M(p,n)$ gives the number of monic irreducible polynomials in $\GF_p[X]$ of degree $n$. Clearly, not all $q!$ permutations in $\mathcal S_{q}$ (or $\mathcal S_{m}$) can be obtained by inverse chaining. For example, these permutation will never map an elements $a$ to itself, unless $a\in \{0,1,p-1\}$.

We note that chaining two involutions is completely different from a composition/product of two involutions. It is well known that any permutation can be obtained by a product of two involutions. In fact, there maybe multiple ways of factoring a permutation into a product of two involutions~\cite{petersen2013write,BurnetteCharles2017FPit}  (but these involutions are not limited to multiplicative inverse in finite fields). 

\section{Inverse Chaining with $\beta$ Bases}
In this section we consider inverse chaining under more than two bases. Given $\beta > 2$ distinct irreducible polynomials of degree $n$. We consider an inverse chain obtained by sequentially switching the basis with respect to the different irreducible polynomials.

\begin{definition}
    For any integer $k>0$, element $a\in \poly(p,n)$, and irreducible polynomial $f_1,f_2,\ldots,f_\beta \in \GF_p[X]$ of degree $n$, we define the $k$-chain of $a$ with respect to $(f_1,f_2,\ldots,f_\beta)$ as the sequence $(a_0,a_1,\ldots,a_k)$ such that
    \[
a_i = 
\begin{cases}
a \quad \text{if } i=0\\
\inv{a_{i-1}}{f_j} \quad \text{where } j = i \mod \beta
\end{cases}
\]
\end{definition}

We say that a $k$-chain of $a$ is a \emph{closed loop} if $k>0$ is the smallest integer such for any $\ell$-chain of $a$ with $\ell \ge k$ we have $(a_{k},\ldots,a_{l}) = (a_0,\ldots,a_{\ell - k})$. Recall that, in the special case of $\beta =2$, a closed loop $(a_0,\ldots,a_k)$ corresponded to simple cycle of length $k$ (i.e., $a_0,\ldots,a_{k-1}$ are distinct). However, this need not be the case for $\beta >0$, as the same element could appear in the closed loop multiple times; see supplementary materials for an example and Figure~\ref{fig:3-bases} for an illustration. Hence, the graph induced by the chaining under $\beta$ alternating basis is no longer just a union of disjoint cycles. In fact, an element can appear in multiple different closed loops when chaining under a fixed sequence of $\beta > 2$ alternating bases.

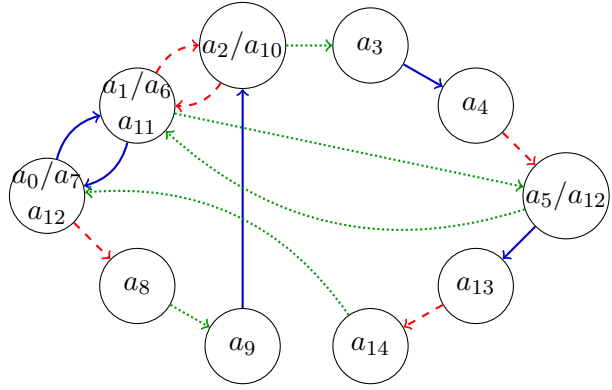
\begin{figure}
    \centering
    \begin{tikzpicture}
\tikzset{circle node/.style={draw, circle, minimum size=1cm, inner sep=0pt}}
        {\node[circle node] (n0) at (0,0){};}
        \node[align = center, anchor = south] at (n0.south) {$a_{0} / a_7$ \\
        $a_{12}$};
        {\node[circle node] (n1) at (1.2,1.2){};}
        \node[align = center, anchor = south] at (n1.south) {$a_{1} / a_6$ \\
        $a_{11}$};
        {\node[circle node] (n2) at (1.2*2 + 0.2,1*2){$a_2 / a_{10}$};}
        {\node[circle node] (n3) at (1.2*3 + 0.7,1*2){$a_3$};}
        {\node[circle node] (n4) at (1.2*4 + 0.9,1.2){$a_4$};}
        {\node[circle node] (n5) at (1.2*5 + 0.9,0){$a_5 / a_{12}$};}
        {\node[circle node] (n8) at (1.2,-1.2){$a_8$};}
        {\node[circle node] (n9) at (1.2*2 + 0.2,-1*2){$a_9$};}
        {\node[circle node] (n14) at (1.2*3 + 0.7,-1*2){$a_{14}$};}
        {\node[circle node] (n13) at (1.2*4 + 0.9,-1.2){$a_{13}$};}
        \draw[bend left, ->, color = blue!80!black, thick] (n0) to node [auto]{} (n1);
        \draw[ ->, color = blue!80!black, thick] (n3) to node [auto]{} (n4);
        \draw[bend left, ->, color = blue!80!black, thick] (n1) to node [auto]{} (n0);
        \draw[ ->, color = blue!80!black, thick] (n9) to node [auto]{} (n2);
        \draw[ ->, color = blue!80!black, thick] (n5) to node [auto]{} (n13);
        \draw[ bend left,->, color = red, thick, dashed] (n1) to node [auto]{} (n2);
        \draw[ ->, color = red, thick, dashed] (n4) to node [auto]{} (n5);
        \draw[->, color = red, thick, dashed] (n0) to node [auto]{} (n8);
        \draw[ bend left, ->, color = red, thick, dashed] (n2) to node [auto]{} (n1);
        \draw[ ->, color = red, thick, dashed] (n13) to node [auto]{} (n14);
        \draw[ ->, color = green!60!black, thick, densely dotted] (n2) to node [auto]{} (n3);
        \draw[ bend left, ->, color = green!60!black, thick, densely dotted] (n5) to node [auto]{} (n1);
        \draw[ ->, color = green!60!black, thick, densely dotted] (n8) to node [auto]{} (n9);
        \draw[ ->, color = green!60!black, thick, densely dotted] (n1) to node [auto]{} (n5);
        \draw[bend right, ->, color = green!60!black, thick, densely dotted] (n14) to node [auto]{} (n0);
    \end{tikzpicture}
    \caption{An illustrative example of a $15$-chain closed loop in $\GF_{16}$ with three alternating bases (in blue, red, green order) starting with $a_0 = x^2+x+1$. The blue, red, and green edges denote multiplicative inverse operation with three irreducible polynomials, $x^4+x+1, x^4+x^3+1$, and $x^4+x^3+x^2+x+1$ respectively.}
    \label{fig:3-bases}
\end{figure}

\begin{lemma}
Given any non-constant polynomial $a\in \GF_p[X]$ of degree at most $n-1$ and $\beta$ distinct irreducible polynomials of degree $n$. The minimum length of closed loop is at least $\beta$.
\end{lemma}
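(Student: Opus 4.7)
The plan is to combine Claim~\ref{claim:distinct inverse} with the periodicity built into the closed-loop definition to force $\beta \mid k$, which immediately yields $k \ge \beta$ since $k>0$.

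First I would verify that the chain never leaves the set of non-constant polynomials: if $a$ had degree at least $1$ but $\inv{a}{f} = c \in \GF_p$ were a constant, then $a \cdot c \equiv 1 \pmod{f}$ would give $a \equiv c^{-1} \pmod{f}$, and since $\deg(a) < \deg(f)$ this would force $a = c^{-1} \in \GF_p$, a contradiction. Hence every $a_i$ in the chain has degree at least $1$, so Claim~\ref{claim:distinct inverse} is applicable at every position.

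Next I would unpack the closed-loop condition: it demands, in particular, that $a_k = a_0$ and $a_{k+1} = a_1$. By the recursive definition, $a_1 = \inv{a_0}{f_1}$ while $a_{k+1} = \inv{a_k}{f_j}$ where $j$ is the residue of $k+1$ modulo $\beta$ (with the convention that residue $0$ means $f_\beta$). Substituting $a_k = a_0$ gives two inverses of the \emph{same} non-constant element under two bases from the list $f_1,\ldots,f_\beta$, and these inverses coincide. Claim~\ref{claim:distinct inverse} then forces $f_j \equiv f_1$; since the $\beta$ bases are pairwise distinct, this means $j = 1$, i.e.\ $k \equiv 0 \pmod{\beta}$. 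Because $k > 0$, we conclude $k \ge \beta$.

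I do not anticipate any substantive obstacle: the argument is a short index chase. The only subtlety is pinning down the convention for "$j = i \bmod \beta$" consistently (in particular, handling the residue $0$), and verifying the non-constant invariant propagates along the chain so that Claim~\ref{claim:distinct inverse} is legitimately applicable at the point where we equate $a_{k+1}$ and $a_1$.
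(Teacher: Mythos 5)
Your proof is correct and follows essentially the same route as the paper: use $a_k = a_0$ and $a_{k+1} = a_1$ together with Claim~\ref{claim:distinct inverse} to force $k \equiv 0 \pmod{\beta}$, hence $k \ge \beta$. Your additional check that the chain never leaves the non-constant polynomials (so the claim is applicable at every step) is a detail the paper leaves implicit, and it is a worthwhile inclusion.
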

\begin{proof}
By definition, a $k$-chain of $a$ forms a closed loop only if $a_{k} = a_0$ and $a_{k+1} = a_1$. Recall that the inverse of $a$ under different basis are all distinct (Claim~\ref{claim:distinct inverse}). Since, $a_{k+1} = \inv{a_k}{f_1}$, we have $k+1 = 1 \mod \beta$. That is, $k = c\cdot\beta$ for some integer $c>0$.
\end{proof}

Moreover, unlike alternating with only $2$ bases, for $\beta>2$ the minimum length of a closed loop can be $\beta$. For example, in $\GF_{2^6}$ there is a closed loop length $3$ under inverse chaining with $\beta = 3$ irreducible polynomials; see the supplementary material.

\section{Supplementary Material}
The colab notebook hosted \href{https://colab.research.google.com/drive/120hUmDyI__0we_8uhRP8r5VVX8KwHK9l?usp=sharing}{\underline{here}}\footnote{\url{https://colab.research.google.com/drive/120hUmDyI__0we_8uhRP8r5VVX8KwHK9l?usp=sharing}} implements various examples of inverse chaining in finite fields.

\section{Discussion and Open Problems}
The preliminary numerical exploration (included as supplementary material) raises many questions about the properties of the graph induced by inverse chaining on \emph{average} and/or for \emph{large} $n$. For example, under inverse chaining w.r.t.\ two irreducible polynomials the length of a closed loop can be as small as $4$ in the worst case, however it is possible to cover {all non-constant elements} in a   \emph{single closed loop}. Indeed, there exists pairs of irreducible polynomials in $\GF_2[X]$ of degree $6$ that produce a closed loop of length $4$, while Figure~\ref{fig:complete-chain} shows an example covering {all non-constant elements} of $\poly(2,3)$. So if we consider two \emph{random} irreducible polynomials in $\GF_2[X]$ (of degree $n$), can we answer the following questions about the \emph{random inverse chaining} induced: How does the minimum/maximum/average length of a closed loop grow asymptotically in $n$? Similarly, what is the (expected) number of cycles/closed-loops in a random inverse chaining partition? Can we characterize when a single closed loop that spans all of $\poly(2,n)\setminus \{0,1\}$ is produced? Further, it would also be interesting to understand the distributional properties of permutations induced by random inverse chaining.

\bibliographystyle{plain}
\bibliography{bib}

\begin{thebibliography}{1}

\bibitem{BurnetteCharles2017FPit}
Charles Burnette.
\newblock {\em Factoring Permutations into the Product of Two Involutions: A
  Probabilistic, Combinatorial, and Analytic Approach}.
\newblock PhD thesis, Drexel University, 2017.

\bibitem{Conrad_Roots}
Keith Conrad.
\newblock Roots and irreducibles.
\newblock
  \url{https://kconrad.math.uconn.edu/blurbs/galoistheory/rootirred.pdf}.

\bibitem{petersen2013write}
T~Kyle Petersen and Bridget~Eileen Tenner.
\newblock How to write a permutation as a product of involutions (and why you
  might care).
\newblock {\em arXiv preprint arXiv:1202.5319}, 2012.

\end{thebibliography}

\end{document}